\title[A proof of Alexander's conjecture]
{A proof of Alexander's conjecture on an inequality of Cassels}
\author{Myriam Ouna\"{\i}es}
\address{Institut de Recherche Math\'ematique Avanc\'ee (IRMA)\\
Universit\'e de Strasbourg, France}
\email{myriam.ounaies@math.unistra.fr}
\subjclass[2020]{30C80, 26D15}
\keywords{Cassels inequality, Alexander conjecture, Schur inequality,
complex inequalities, extremal configurations}
\newtheorem{theorem}{Theorem}
\newtheorem{corollary}[theorem]{Corollary}
\theoremstyle{definition}
\theoremstyle{remark}
\begin{document}

\begin{abstract}
Let $z_1,\dots,z_n$ be complex numbers with $|z_j|\le \rho$, where $\rho>1$.
Cassels proved that, under an additional restriction on $\rho$,
the inequality
\[
\prod_{j\ne k}\bigl|1-\overline{z_j}z_k\bigr|
\le
\left(\frac{\rho^{2n}-1}{\rho^2-1}\right)^{\!n}
\]
holds.
In a subsequent note, Alexander conjectured that this inequality is in fact valid without any
restriction on $\rho$.
In this paper, we confirm Alexander's conjecture.
\end{abstract}

\maketitle

\section{Introduction}

We will use the following notations for the unit disc and the unit circle :
\[\mathbb{D}=\{z\in\mathbb{C}:|z|<1\},\ \ \overline{\mathbb{D}}=\{z\in\mathbb{C}:|z|\le 1\},\ \  
\mathbb{T}=\{z\in\mathbb{C}:|z|=1\}.\]

Let $z_1,\dots,z_n$ be in $\mathbb{D}$.
A classical inequality, going back to Schur~\cite{Schur1918}, states that
\begin{equation}\label{eq:schur}
\prod_{j\ne k}\bigl|1-\overline{z_j}z_k\bigr|\le n^n,
\end{equation}
with equality if and only if the $z_j$ are the vertices of a regular $n$-gon
inscribed in $\Bbb T$.

In connection with a problem of Schinzel and Zassenhaus, Cassels~\cite{Cassels1966}
extended~\eqref{eq:schur} with the following:

\begin{theorem}[Cassels]
Let $\rho>1$ and let $z_1,\dots,z_n\in \{z\in \Bbb C :|z|\le \rho\}$.
Suppose
\begin{equation}\label{condition}
\cos(\pi/n)\le \frac{\rho^2}{\rho^4-\rho^2+1}.
\end{equation}
Then
\begin{equation}\label{eq:cassels}
\prod_{j\ne k}\bigl|1-\overline{z_j}z_k\bigr|
\le
\left(\frac{\rho^{2n}-1}{\rho^2-1}\right)^{\!n},
\end{equation}
with equality if and only if the $z_j$ are the vertices of a regular $n$-gon
inscribed in the circle $\{z\in \Bbb C :|z|= \rho\}$.
\end{theorem}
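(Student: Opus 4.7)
I would prove this in three stages.

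\emph{Stage 1: reduction to the boundary.} Fix $k$ and regard
\[
z_k \longmapsto \sum_{j\ne k}\bigl(\log|1-\overline{z_j}z_k| + \log|1-\overline{z_k}z_j|\bigr)
\]
as a function on $|z_k|\le\rho$ with the other $z_j$ held fixed. Each summand is the logarithm of the modulus of an affine function of $z_k$ or of $\overline{z_k}$, hence subharmonic in $z_k$. The sum is therefore subharmonic on the disc, and by the maximum principle its maximum is attained on the circle $|z_k|=\rho$. Iterating over $k$, one may assume $z_k=\rho e^{i\theta_k}$ for every $k$.

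\emph{Stage 2: Fourier reformulation.} With this parametrization $\overline{z_j}z_k=\rho^2 e^{i(\theta_k-\theta_j)}$ and one has the explicit expansion
\[
\log\bigl|1-\rho^2 e^{i\alpha}\bigr|\;=\;2\log\rho\;-\;\sum_{m\ge 1}\frac{\cos(m\alpha)}{m\,\rho^{2m}}.
\]
Writing $S_m:=\sum_k e^{im\theta_k}$, so that $\sum_{j,k}\cos(m(\theta_k-\theta_j))=|S_m|^2$, a direct computation reduces the desired inequality to
\begin{equation*}
\sum_{m\ge 1}\frac{|S_m|^2\;-\;n^2\,\mathbf{1}_{\{n\mid m\}}}{m\,\rho^{2m}}\;\ge\;0, \tag{$\ast$}
\end{equation*}
with equality at the regular $n$-gon $\theta_k=2\pi k/n$, where $|S_m|^2=n^2\,\mathbf{1}_{\{n\mid m\}}$.

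\emph{Stage 3: proving $(\ast)$.} The series splits into positive contributions from modes with $n\nmid m$ (where $|S_m|^2\ge 0$) and negative contributions from modes with $n\mid m$ (where $|S_m|^2\le n^2$). A second-variation computation near the regular $n$-gon reduces to the elementary inequality $l\rho^{2(n-l)}+(n-l)\rho^{-2l}\ge n$ for $l=1,\dots,n-1$, which follows from weighted AM--GM; this shows that the regular configuration is a strict local maximum for every $\rho>1$. The hypothesis~\eqref{condition}, equivalent to $\rho^2+\rho^{-2}\le 1+\sec(\pi/n)$, should then enter to absorb the dominant negative term at $m=n$ into the positive contributions of the low modes $m=1,\dots,n-1$, thereby promoting the local statement to a global one.

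The main obstacle is this last step: passing from local to global maximality of the regular $n$-gon. The local analysis succeeds for every $\rho>1$, consistent with Alexander's conjecture, but the classical global argument requires controlling the coupling between modes indexed by multiples of $n$ and the remaining modes, and this coupling becomes delicate once $\rho$ exceeds the range prescribed by~\eqref{condition}. Lifting precisely this restriction is the content of Alexander's conjecture that the present paper sets out to resolve.
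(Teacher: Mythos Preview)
Your Stages~1 and~2 are sound: the reduction to $|z_j|=\rho$ via subharmonicity matches the paper's own reduction, and the Fourier expansion leading to $(\ast)$ is correct. The gap is Stage~3, which you yourself flag. You establish only that the regular $n$-gon is a \emph{local} maximum (valid, as you note, for every $\rho>1$), and then assert that condition~\eqref{condition} ``should'' promote this to a global statement by letting the modes $m=1,\dots,n-1$ absorb the deficit at $m=n$. But you supply no mechanism for this: the negative contributions in $(\ast)$ come from \emph{all} multiples $m=n,2n,3n,\dots$, not just $m=n$; the positive quantities $|S_m|^2$ for $n\nmid m$ admit no useful lower bound away from the regular configuration; and a second-variation estimate says nothing about configurations far from the extremizer. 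As written, Stage~3 is a description of the difficulty rather than a proof, and you do not actually show where \eqref{condition} enters.

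For comparison, the paper does not prove Cassels' theorem in isolation; it proves the stronger Theorem~\ref{thm:main} (no restriction on $\rho$), which subsumes it. The method bypasses the mode-coupling problem entirely. An auxiliary \emph{additive} inequality $\sum_{j,k}(1-\overline{z_j}z_k)^{-1}\ge n^2/(1-|\Lambda|^2)$ (Theorem~\ref{thm:additive}) is obtained by Cauchy--Schwarz in $L^2(\mathbb T)$, using the identity $\frac{1}{2\pi}\int_0^{2\pi} f(e^{it})\bigl(1-\Lambda\,\overline{B}(e^{it})\bigr)\,dt=n$, where $B$ is the Blaschke product with zeros $z_j$ and $f(z)=\sum_j(1-\overline{z_j}z)^{-1}$. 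Specialized to $z_j=\sqrt{a}\,\omega_j$, this shows that $g_\omega(a)=n\log(1-a^n)-\sum_{j,k}\log|a-\overline{\omega_j}\omega_k|$ is nondecreasing on $[0,1)$; since $g_\omega(0)=0$, evaluating at $a=\rho^{-2}$ yields the multiplicative inequality for every $\rho>1$. This monotonicity-in-$\rho$ device is exactly what your Fourier decomposition lacks.
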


Alexander~\cite{Alexander1972} observed that condition \eqref{condition} may be replaced
by the weaker assumption
\[
\cos(\pi/n)\le \frac{2\rho^2}{\rho^4+1},
\]
and conjectured that \eqref{eq:cassels} should in fact hold for all $\rho>1$,
without assuming \eqref{condition}.

In a recent work, Dubickas~\cite{Dubickas2020} revisited Cassels' inequality by
observing that, for $z_j=\rho\,\omega_j$ with $\omega_j\in \Bbb T$ ($j=1,\dots,n$), one has
\[
\prod_{j\neq k}\bigl|1-\overline{z_j}z_k\bigr|
=\rho^{n(n-1)}\prod_{j<k}\left((\rho-\rho^{-1})^2+|\omega_j-\omega_k|^2\right).
\]
He conjectured that the elementary symmetric functions of the quantities
$\{|\omega_j-\omega_k|^2,\ j<k\}$ attain their maximum for the vertices of a regular
$n$-gon inscribed in $\mathbb{T}$, and proved this for the elementary symmetric functions of degree
$1$ through $4$. His conjecture implies
Alexander's.

In the present paper, we confirm Alexander's conjecture.
Our approach relies on an auxiliary additive inequality and a monotonicity argument.
% =========================
% Main result
% =========================
\section{Main result}

\begin{theorem}\label{thm:main}

Let $\rho>1$ and let $z_1,\dots,z_n\in \{z\in \Bbb C :|z|\le \rho\}$. Then the following inequality holds:
\begin{equation}\label{ineqmain2}
\prod_{j\neq k}\vert 1-\overline{z_j}z_k\vert\le (\rho^{2n}-1)^n(\rho^2-1)^{-n}.
\end{equation}
Equality is attained if and only if the $z_j$ are the vertices of a regular $n$-gon inscribed in the circle $\{z\in \Bbb C :|z|= \rho\}$.

\end{theorem}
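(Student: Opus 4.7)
The abstract's hint of ``an auxiliary additive inequality and a monotonicity argument'' suggests the following four-step structure.

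\emph{Boundary reduction and reformulation.} For each $j$, the map $z_j\mapsto\sum_{k\ne j}\log|1-\bar z_k z_j|$ is subharmonic on $\mathbb{C}$ (a sum of logarithms of moduli of functions holomorphic in $z_j$), so the maximum principle on $\{|z_j|\le\rho\}$ lets me assume $|z_j|=\rho$ for every $j$. Writing $z_j=\rho\omega_j$ with $\omega_j\in\mathbb{T}$, the identity
\[
|1-\rho^2\bar\omega_j\omega_k|^2=\rho^2\bigl((\rho-\rho^{-1})^2+|\omega_j-\omega_k|^2\bigr),
\]
together with the direct evaluation $\rho^{n(n-1)}\prod_{j<k}(u+x^{*}_{jk})=((\rho^{2n}-1)/(\rho^2-1))^n$ at the regular $n$-gon $\omega^{*}_j=e^{2\pi ij/n}$ (obtained from $\prod_{\ell=0}^{n-1}(1-\rho^2e^{2\pi i\ell/n})=1-\rho^{2n}$), reduces \eqref{ineqmain2} to
\[
\prod_{j<k}(u+x_{jk})\le\prod_{j<k}(u+x^{*}_{jk}),\qquad u=(\rho-\rho^{-1})^2,\ x_{jk}=|\omega_j-\omega_k|^2.
\]

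\emph{Additive inequality.} The heart of the proof is the lemma: for every $u>0$ and $\omega_j\in\mathbb{T}$,
\[
\sum_{j<k}\frac{1}{u+|\omega_j-\omega_k|^2}\ \ge\ \sum_{j<k}\frac{1}{u+|\omega^{*}_j-\omega^{*}_k|^2},
\]
with equality iff the $\omega_j$ form a rotation of the regular $n$-gon. This asserts that the regular $n$-gon minimizes a completely monotone energy on $\mathbb{T}$, and I would prove it Fourier-analytically. The kernel $K(\omega,\omega')=1/(u+|\omega-\omega'|^2)$ is a positive multiple of the Poisson kernel: with $r\in(0,1)$ determined by $r+r^{-1}=u+2$, one has $K(\omega,\omega')=(r/(1-r^2))\sum_{m\in\mathbb{Z}}r^{|m|}(\omega\bar\omega')^m$. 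Setting $S_m=\sum_j\omega_j^m$, the total energy $\sum_{j,k}K(\omega_j,\omega_k)$ equals a positive multiple of $\sum_m r^{|m|}|S_m|^2$. At the regular $n$-gon precisely $|S_m|^2=n^2\mathbf{1}_{n\mid m}$; Poisson summation over the subgroup $n\mathbb{Z}\subset\mathbb{Z}$ then expresses the discrepancy between a general configuration and the regular one as a manifestly nonnegative combination of Fourier contributions, vanishing only in the regular case.

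\emph{Monotonicity and conclusion.} Define $F(u)=\sum_{j<k}\log\frac{u+x_{jk}}{u+x^{*}_{jk}}$. The additive inequality gives $F'(u)\ge0$, so $F$ is non-decreasing on $[0,\infty)$; since $F(u)\to 0$ as $u\to\infty$, we conclude $F(u)\le0$ for every $u\ge0$, which is the product inequality of the reformulation. If equality holds at $u_0=(\rho-\rho^{-1})^2$, then $F\equiv 0$ on $[u_0,\infty)$, so $F'\equiv 0$ there; because $F'$ is a rational function of $u$, it must vanish identically, and the rigidity clause of the additive inequality then forces the regular $n$-gon (and, a fortiori, recovers Schur's inequality as the case $u=0$).

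\emph{Main obstacle.} The substantive work is the additive inequality, an energy-minimization statement on $\mathbb{T}$ in the spirit of the Cohn--Kumar theory of universally optimal configurations. Its clean self-contained proof requires a careful Fourier-analytic balance: the positive contributions of modes with $n\nmid m$ (where $|S_m|^2\ge 0$ vanishes only at the regular $n$-gon) must compensate the loss at modes with $n\mid m$ (where $|S_m|^2\le n^2$ is saturated only at the regular $n$-gon). Once this is in hand, the monotonicity step and the boundary reduction are essentially formal.
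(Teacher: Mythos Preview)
Your architecture matches the paper's exactly: boundary reduction by the maximum principle, an \emph{additive} inequality on $\mathbb{T}$, and a monotonicity integration. Under $r+r^{-1}=u+2$, your additive lemma is precisely the paper's Corollary~\ref{cor:key}, $\sum_{j,k}(1-a\,\overline{\omega_j}\omega_k)^{-1}\ge n^2/(1-a^{n})$: both rewrite as $\sum_{m\ge 1}a^{m}|S_m|^{2}\ge n^{2}a^{n}/(1-a^{n})$ with $S_m=\sum_j\omega_j^{m}$, and your $F(u)$ is the paper's $g_\omega(a)$ in different coordinates (anchored at $u=\infty$ versus $a=0$). The gap is exactly where you flag it. The sentence ``Poisson summation over $n\mathbb{Z}$ expresses the discrepancy as a manifestly nonnegative combination'' is not a proof: the identity $n\,\mathbf{1}_{n\mid m}=\sum_{\ell=0}^{n-1}e^{2\pi i\ell m/n}$ does \emph{not} turn $\sum_{m}a^{|m|}\bigl(|S_m|^{2}-n^{2}\mathbf{1}_{n\mid m}\bigr)$ into a sum of squares, and, as you yourself note, the modes with $n\mid m$ contribute with the wrong sign. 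Invoking Cohn--Kumar universal optimality of the regular $n$-gon on $S^{1}$ is correct in spirit but is a black box at least as hard as the statement you are after.

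The paper fills this gap with a short $L^{2}(\mathbb{T})$ argument. With $z_j=\sqrt{a}\,\omega_j$, set
\[
f(z)=\sum_{j=1}^{n}\frac{1}{1-\overline{z_j}z},\qquad B(z)=\prod_{j=1}^{n}\frac{z-z_j}{1-\overline{z_j}z},\qquad \Lambda=B(0)=(-1)^{n}\prod_{j}z_j.
\]
Because $B(z_j)=0$, the Cauchy formula gives $\frac{1}{2\pi}\int_{0}^{2\pi}B\,\overline{f}\,dt=\sum_jB(z_j)=0$, whence $\frac{1}{2\pi}\int_{0}^{2\pi}f\,(1-\Lambda\overline{B})\,dt=f(0)=n$; and since $|B|\equiv 1$ on $\mathbb{T}$, one has $\|1-\Lambda\overline{B}\|_{2}^{2}=1-|\Lambda|^{2}=1-a^{n}$. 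Cauchy--Schwarz now yields
\[
n^{2}\ \le\ (1-a^{n})\,\|f\|_{2}^{2}\ =\ (1-a^{n})\sum_{j,k}\frac{1}{1-a\,\overline{\omega_j}\omega_k},
\]
which is exactly the additive inequality you need. Equality forces $f=c\,(1-\overline{\Lambda}B)$, and comparing Taylor coefficients kills the intermediate elementary symmetric functions $e_1,\dots,e_{n-1}$, so the $\omega_j$ form a regular $n$-gon. With this lemma in hand, your monotonicity step and equality analysis go through verbatim.
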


% =========================
% Auxiliary result
% =========================
\section{An auxiliary inequality}

\begin{theorem}\label{thm:additive}

Let $z_1,\dots,z_n\in \mathbb{D}$ and denote $\Lambda=(-1)^n\prod_{j=1}^n z_j$. Then we have the following inequality:
\begin{equation}\label{ineqmain}
\sum_{j,k}\frac1{1-\overline{z_j}z_k}\ge \frac{n^2}{1-\vert \Lambda\vert^2}.
\end{equation}
Equality is attained if and only if the $z_j$ are the $n$ roots of the polynomial $P(z)=z^n+\Lambda$. 

\end{theorem}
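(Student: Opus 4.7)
The plan is to recast the sum on the left of \eqref{ineqmain} as a squared Hardy-space norm and then deduce the inequality from a single application of Cauchy--Schwarz. Let $K_w(z) = 1/(1-\overline{w}z)$ denote the Szeg\H{o} kernel of $H^2(\mathbb{D})$; by its reproducing property, $\langle K_{z_j}, K_{z_k}\rangle_{H^2} = 1/(1-\overline{z_j}z_k)$, so the left-hand side of \eqref{ineqmain} equals $\|F\|_{H^2}^2$ for $F := \sum_{j=1}^n K_{z_j}$.

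To produce the factor $1/(1-|\Lambda|^2)$, I would introduce the finite Blaschke product $B(z) = \prod_{j=1}^n (z-z_j)/(1-\overline{z_j}z)$, which is inner, vanishes at each $z_j$, and satisfies $B(0) = (-1)^n\prod z_j = \Lambda$. Choose as test function $g(z) := 1 - \overline{\Lambda}\,B(z)$. Using $|B|\equiv 1$ on $\mathbb{T}$ together with $\int_{\mathbb{T}} B\, dm = B(0) = \Lambda$, one computes $\|g\|_{H^2}^2 = 1 - |\Lambda|^2$. On the other hand, since $B(z_j) = 0$ we have $g(z_j) = 1$ for every $j$, so $\langle F, g\rangle = \sum_j \overline{g(z_j)} = n$. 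Cauchy--Schwarz then gives $n^2 = |\langle F, g\rangle|^2 \le \|F\|^2\|g\|^2 = \bigl(\sum_{j,k} 1/(1-\overline{z_j}z_k)\bigr)(1-|\Lambda|^2)$, which is exactly \eqref{ineqmain}.

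For the characterization of equality, Cauchy--Schwarz is sharp precisely when $F = \lambda g$ with $\lambda = n/(1-|\Lambda|^2)$. Writing $Q(z) := \prod(z-z_j) = z^n + a_{n-1}z^{n-1} + \dots + a_0$ (so that $a_0 = \Lambda$) and $Q^*(z) := \prod(1-\overline{z_j}z)$, this identity becomes the polynomial equation $Q^*(z)F(z) = \lambda\bigl(Q^*(z) - \overline{\Lambda}\,Q(z)\bigr)$. Expanding $F(z) = \sum_{m\ge 0}\overline{p_m}\,z^m$ with $p_m = \sum_j z_j^m$ and invoking Newton's identities yields $[z^k](Q^*F) = (n-k)\overline{a_{n-k}}$ for $0\le k\le n-1$; matching coefficients of $z^k$ for each $1\le k\le n-1$ gives $(n-k-\lambda)\overline{a_{n-k}} = -\lambda\overline{\Lambda}\,a_k$. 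Pairing this with the analogous equation obtained by swapping $k \leftrightarrow n-k$ and eliminating $\overline{a_{n-k}}$ reduces, via a short algebraic simplification using $\lambda = n/(1-|\Lambda|^2)$, to $k(n-k)(1-|\Lambda|^2)^2\,a_k = 0$. Since $|\Lambda| < 1$, this forces $a_k = 0$ for all $1\le k\le n-1$, so $Q(z) = z^n + \Lambda$.

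The main difficulty lies in the equality analysis: the inequality itself drops out almost immediately once the correct test function $g$ has been identified, whereas ruling out all non-extremal configurations requires the coefficient-by-coefficient argument via Newton's identities sketched above.
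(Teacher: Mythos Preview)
Your proposal is correct and follows essentially the same route as the paper: the paper also introduces $f(z)=\sum_j 1/(1-\overline{z_j}z)$ and the Blaschke product $B$, establishes (via explicit Cauchy integrals rather than reproducing-kernel language) the three identities $\|f\|^2=\sum_{j,k}1/(1-\overline{z_j}z_k)$, $\langle f,\,1-\overline{\Lambda}B\rangle=n$, and $\|1-\overline{\Lambda}B\|^2=1-|\Lambda|^2$, and then applies Cauchy--Schwarz. For the equality case the paper likewise clears denominators in $f=c(1-\overline{\Lambda}B)$ and matches coefficients to obtain the same $2\times 2$ system with determinant $k(n-k)(1-|\Lambda|^2)^2$; the only cosmetic difference is that the paper reads off the coefficients of $Q^*F$ by a direct counting argument rather than via Newton's identities.
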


\begin{corollary}\label{cor:key}

Let $\omega_1,\dots,\omega_n\in \Bbb T$ and $a\in (0,1)$ . Then the following inequality holds:
\begin{equation}\label{ineqmain1}
\sum_{j,k}\frac1{1-a\overline{\omega_j} \omega_k}\ge \frac{n^2}{1-a^{n}}.
\end{equation}
Equality is attained if and only if the $\omega_j$ are the vertices of a regular $n$-gon inscribed in $\Bbb T$.

\end{corollary}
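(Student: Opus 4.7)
The plan is to deduce the corollary directly from Theorem~\ref{thm:additive} by a simple change of variable. Specifically, I would set $z_j=\sqrt{a}\,\omega_j$ for $j=1,\dots,n$. Since $a\in(0,1)$, we have $|z_j|=\sqrt{a}<1$, so $z_j\in\mathbb{D}$ and Theorem~\ref{thm:additive} applies. Moreover $\overline{z_j}z_k=a\,\overline{\omega_j}\omega_k$, so the left-hand side of \eqref{ineqmain} becomes exactly the left-hand side of \eqref{ineqmain1}.

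For the right-hand side, compute $\Lambda=(-1)^n\prod_j z_j=(-1)^n a^{n/2}\prod_j\omega_j$, whence $|\Lambda|=a^{n/2}$ and $1-|\Lambda|^2=1-a^n$. Substituting into \eqref{ineqmain} gives \eqref{ineqmain1}.

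For the equality case, Theorem~\ref{thm:additive} yields equality iff the $z_j$ are the $n$ roots of $z^n+\Lambda$, i.e.\ $z_j^n=-\Lambda$ for all $j$. Translating back via $z_j=\sqrt{a}\,\omega_j$, this becomes $\omega_j^n=(-1)^{n+1}\prod_k\omega_k$, a fixed unimodular constant independent of $j$; hence the $\omega_j$ are the $n$-th roots of a point on $\mathbb{T}$, i.e.\ the vertices of a regular $n$-gon inscribed in $\mathbb{T}$. Conversely, any such configuration clearly gives equality. No serious obstacle is anticipated here: the entire content of the corollary is a rescaling of Theorem~\ref{thm:additive}, so the only care required is in tracking the equality condition through the substitution.
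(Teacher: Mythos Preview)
Your proposal is correct and follows essentially the same approach as the paper: apply Theorem~\ref{thm:additive} to $z_j=\sqrt{a}\,\omega_j$, note that $\overline{z_j}z_k=a\,\overline{\omega_j}\omega_k$ and $|\Lambda|^2=a^n$, and read off the equality case from that of Theorem~\ref{thm:additive}. Your discussion of the equality case is in fact a bit more explicit than the paper's, which simply states that it follows from that of Theorem~\ref{thm:additive}.
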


\begin{proof}
Apply Theorem~\ref{thm:additive} to $z_j=\sqrt{a}\,\omega_j\in \Bbb D$ $(j=1,\dots,n)$. Then \[
1-\overline{z_j}z_k=1-a\,\overline{\omega_j}\omega_k,\qquad 
|Z|^2=\Bigl|\prod_{j=1}^n z_j\Bigr|^2=a^n.
\]
This gives \eqref{ineqmain1}. The equality case follows from that of
Theorem~\ref{thm:additive}.
\end{proof}

% =========================
% Proof of main theorem
% =========================
\section{Proof Theorem~\ref{thm:main}}

Fix an index $k$. The map
\[
z_k\longmapsto \prod_{j\neq k}(1-\overline{z_j}z_k)
\]
is holomorphic in the disc $\{|z_k|\le\rho\}$. By the maximum modulus principle,
the quantity $\prod_{j\neq k}|1-\overline{z_j}z_k|$ is maximized when $|z_k|=\rho$.
Iterating this argument for $k=1,\dots,n$ (applied to the factors involving $z_k$
in the product) shows that
\begin{equation}\label{ineqmain4}
\prod_{j\neq k}|1-\overline{z_j}z_k|
\le
\max_{|\omega_1|=\cdots=|\omega_n|=1}
\prod_{j\neq k}|1-\rho^2\overline{\omega_j}\omega_k|,
\end{equation}
and equality can occur only if $|z_1|=\cdots=|z_n|=\rho$.

Setting $z_j=\rho\,\omega_j$ with $\omega_j\in \Bbb T$ ($j=1,\dots,n)$, \eqref{ineqmain2} is equivalent to
\begin{equation}\label{ineqmain3}
\prod_{j,k}\bigl|\rho^{-2}-\overline{\omega_j}\omega_k\bigr|
\le
(1-\rho^{-2n})^n .
\end{equation}
We fix $\omega_1,\dots,\omega_n$ on $\Bbb T$ and we define, for $a\in [0,1)$,
\[g_\omega(a)=n\log(1-a^n)-\sum_{j,k}\log \vert a-\overline{\omega_j}\omega_k\vert.\]

Using $\frac{d}{da}\log|a-z|=\Re\frac{1}{a-z}$ and the fact that $\sum_{j,k}\!\left(\frac{1}{a-\overline{\omega_j}\omega_k}\right)$ is real, we compute
\[
\begin{split}
a g_\omega'(a)
&=
-\frac{n^2a^{n}}{1-a^n}
-
\sum_{j,k}\!\left(\frac{a}{a-\overline{\omega_j}\omega_k}\right)\\
&=
-\frac{n^2}{1-a^n}
+
\sum_{j,k}\!\left(1-\frac{a}{a-\overline{\omega_j}\omega_k}\right)\\
&=
-\frac{n^2}{1-a^n}
+
\sum_{j,k}\!\left(\frac{1}{1-a\overline{\omega_j}\omega_k}\right)\\
\end{split}
\]
It follows from Corollary \ref{cor:key} that $g_\omega'(a)\ge 0$ for all $a\in[0,1)$.
Since $g_\omega(0)=0$, we conclude that $g_\omega(\rho^{-2})\ge 0$, which proves
\eqref{ineqmain3} and hence the desired inequality \eqref{ineqmain2}.

Now assuming that $z_1,\dots,z_n$ realize equality in \eqref{ineqmain2},
they also realize equality in \eqref{ineqmain4}. Hence $z_j=\rho \omega_j$
with $\omega_j\in \Bbb T$ ($j=1,\dots,n$) and $g_\omega(\rho^{-2})=0$.
Since $g_\omega$ is nondecreasing and $g_\omega(0)=g_\omega(\rho^{-2})=0$,
it follows that $g_\omega$ is constant on $[0,\rho^{-2}]$, hence
$g_\omega'(\rho^{-2})=0$, that is, 
\[
\sum_{j,k}\frac{1}{1-\rho^{-2}\overline{\omega_j}\omega_k}
=\frac{n^2}{1-\rho^{-2n}}.
\]
By Corollary \ref{cor:key}, the $\omega_j$ are the vertices of a regular $n$-gon inscribed in $\Bbb T$.

Conversely, if the $z_j$ are the vertices of a regular $n$-gon inscribed in $\{z\in \Bbb C :|z|= \rho\}$, then
\[
P(z)=\prod_{j=1}^n(z-z_j)=z^n+\Lambda, \qquad |\Lambda|=\rho^n.
\]
Hence
\[
\prod_{j,k}\bigl|1-\overline{z_j}z_k\bigr|
=\rho^n\prod_{j=1}^n\bigl|\overline{z_j}^{-n}+\Lambda\bigr|
=\rho^n\prod_{k=1}^n\bigl|-\overline{\Lambda}^{-1}+\Lambda\bigr|
=(\rho^{2n}-1)^n.
\]

% =========================
% Proof of auxiliary theorem
% =========================
\section{Proof of Theorem~\ref{thm:additive}}

%\begin{proof}
Recall that the Cauchy integral formula gives, for a holomorphic function $F$
in a neighborhood of $\overline{\mathbb{D}}$ and for $\zeta\in\mathbb{D}$,
\begin{equation}\label{Cauchy}
\frac1{2\pi}\int_0^{2\pi}\frac{e^{it}F(e^{it})}{e^{it}-\zeta}\,dt
=\frac{1}{2\pi i}\int_{|z|=1}\frac{F(z)}{z-\zeta}\,dz
=F(\zeta).
\end{equation}
Fix $z_1,\dots,z_n$ in $\mathbb{D}$ and define the folowing functions, holomorphic in a neighborhood of $\overline{\mathbb{D}}$:
\[f(z)=\sum_{j=1}^n \frac{1}{1-\overline{z_j}z},\ \ B(z)=\prod_{j=1}^n \frac{z-z_j}{1-\overline{z_j}z}.\]
Note that for all $t\in [0,2\pi]$,
\[|f(e^{it})|^2=\sum_{j,k}
\frac{e^{it}}{(1-\overline{z_j}e^{it})(e^{it}-z_k)}.
\]
%\begin{equation*}
%f(z)=\sum_{j=1}^n \frac{1}{1-\overline{z_j}z},
%\end{equation*}
%and the finite Blaschke product
%\begin{equation*}
%B(z):=\prod_{j=1}^n \frac{z-z_j}{1-\overline{z_j}z}.
%\end{equation*}
%Both are holomorphic in a neighborhood of $\overline{\mathbb{D}}$.
By \eqref{Cauchy}, we have
\begin{equation}\label{f2}
\frac1{2\pi}\int_0^{2\pi} \vert f(e^{it})\vert^2\,dt=\sum_{j,k}\frac1{2\pi}\int_0^{2\pi}\frac{e^{it}}{(1-\overline{z_j}e^{it})(e^{it}-z_k)}\,dt=\sum_{j,k}\frac1{1-\overline{z_j}z_k},
\end{equation}
\begin{equation}\label{Boverlinef}
\frac1{2\pi}\int_0^{2\pi} B(e^{it})\,\overline{f}(e^{it})\,dt
=\sum_{j=1}^n \frac1{2\pi}\int_0^{2\pi} B(e^{it})\,\frac{e^{it}}{e^{it}-z_j}\,dt
=\sum_{j=1}^n B(z_j)=0,
\end{equation}
\begin{equation}\label{B(0)}
\frac1{2\pi}\int_0^{2\pi}B(e^{it})\,dt=B(0)=\Lambda,
\end{equation}
and
\begin{equation}\label{f(0)}
\frac1{2\pi}\int_0^{2\pi} f(e^{it})\,dt=f(0)=n.
\end{equation}
We multiply the complex conjugate of \eqref{Boverlinef} by $\Lambda$ and we subtract it from (\ref{f(0)}) to find 
\begin{equation}\label{f-ZoverlineB}
\begin{split}
n&=\frac1{2\pi}\int_0^{2\pi} f(e^{it})\left(1-\Lambda\overline{B}(e^{it})\right)\,dt.
\end{split}
\end{equation}

%\frac1{2\pi}\int_0^{2\pi}\frac{e^{it}dt}{(1-\overline{z_j}e^{it})(e^{it}-z_k)}
%=\frac{1}{2\pi i}\oint_{|z|=1}\frac{dz}{(1-\overline{z_j}z)(z-z_k)}
%=\frac{1}{1-\overline{z_j}z_k}.
%\]
%Summing over $j,k$ gives 
%\begin{equation}\label{normf}
%\sum_{j,k}\frac1{1-\overline{z_j}z_k}=\frac1{2\pi}\int_0^{2\pi} \vert f(e^{it})\vert^2dt.
%\end{equation}
Besides, for all $t\in [0,2\pi]$, since $\vert B(e^{it})\vert=1$, we have 
\[
\left\vert 1-\Lambda \overline{B}(e^{it})\right\vert^2=1+\vert \Lambda\vert^2-\overline{\Lambda}B(e^{it})- \Lambda\overline{B}(e^{it}).
\]
Integrating and using \eqref{B(0)} yields 
\begin{equation}\label{normB}
\begin{split}
\frac1{2\pi}\int_0^{2\pi} \left\vert 1-\Lambda\overline{B}(e^{it})\right\vert^2dt=1-\vert \Lambda\vert^2.
\end{split}
\end{equation}
%Cauchy's formula gives first
%\begin{equation}\label{cauchy1}
%n=f(0)=\frac1{2\pi}\int_0^{2\pi} f(e^{it})\,dt,
%\end{equation}
%next, 
%On the unit circle, $\overline{f}(e^{it})=\sum_{j=1}^n \frac{e^{it}}{e^{it}-z_j}$.
Now \eqref{f2}, \eqref{f-ZoverlineB}, \eqref{normB} and Cauchy--Schwarz applied to the integral in \eqref{f-ZoverlineB} gives the desired inequality:
\begin{equation}\label{CS}
\begin{split}
n^2&\le
%\Big(\frac1{2\pi}\int_0^{2\pi}|1-Z\overline{B(e^{it})}|^2dt\Big)\Big(\frac1{2\pi}\int_0^{2\pi}|f(e^{it})|^2dt\Big)
(1-\vert \Lambda \vert^2)\sum_{j,k}\frac1{1-\overline{z_j}z_k}.
\end{split}
\end{equation}
We now turn to the equality case. Equality in \eqref{CS} holds if and only if
\begin{equation}\label{equ}
f=c (1-\overline{\Lambda}B),
\end{equation}
for some constant $c$. Evaluating in $0$, we find that
\[c=\frac n{1-\vert \Lambda\vert^2}.\]
Let $e_m=e_m(z_1,\dots,z_n)$ be the elementary symmetric polynomials:
\[
e_m=\sum_{1\le j_1<\cdots<j_m\le n}z_{j_1}\cdots z_{j_m},
\]
Put
\[
\begin{split}
P(z)&=\prod_{j=1}^n(z-z_j)=\sum_{m=0}^n (-1)^{\,n-m}e_{n-m}\,z^m\\
Q(z)&=\prod_{j=1}^n(1-\overline{z_j}z)=\sum_{m=0}^n (-1)^m\overline{e_m}\,z^m,\\
R(z)&=\sum_{j=1}^n\prod_{k\ne j}(1-\overline{z_k}z)=\sum_{m=0}^{n-1}(-1)^m (n-m)\,\overline{e_m}\,z^m,
\end{split}
\]
where the last identity follows by counting: each monomial of degree $m$ in the $\overline{z_j}$
is omitted by exactly $m$ indices and therefore appears exactly $n-m$ times.

We have 
\[
B=\frac{P}{Q},\ f=\frac{R}{Q}.
\]
Multiplying (\ref{equ}) by $(1-\vert \Lambda\vert^2)Q$, we obtain 
\[
\begin{split}
(1-\vert \Lambda\vert^2)\sum_{m=0}^{n-1}(-1)^m (n-m)\,\overline{e_m}\,z^m&=n\big(\sum_{m=0}^n (-1)^m\overline{e_m}\,z^m-\overline{\Lambda}\,\sum_{m=0}^n (-1)^{\,n-m}e_{n-m}\,z^m\big)\\
&=n\sum_{m=0}^n \big((-1)^m\overline{e_m}\,-\overline{\Lambda} (-1)^{n-m}e_{n-m}\big)z^m\\
\end{split}
\]
Identifying the coefficients in front of $z^m$ for $m=1,\dots,n-1$, we get 
\begin{equation}\label{1}
(n-(1-\vert \Lambda\vert^2)(n-m))(-1)^m\overline{e_m}=n \overline{\Lambda}(-1)^{n-m} e_{n-m}.
\end{equation}
Also replacing $m$ by $n-m$ and taking the conjugate,
\begin{equation}\label{2}
(n-(1-\vert \Lambda\vert^2)m)(-1)^{n-m}e_{n-m}=n \Lambda(-1)^m\overline{e_{m}}.
\end{equation}
Identities (\ref{1}) and (\ref{2}) yield to 
\[
\begin{split}
(n-(1-\vert \Lambda\vert^2)m)(n-(1-\vert \Lambda\vert^2)(n-m))\overline{e_m}&=n^2\vert \Lambda\vert^2\overline{e_m}\\
\Leftrightarrow (1-\vert \Lambda\vert^2)^2m(n-m)\overline{e_m}&=0.
\end{split}
\]
We deduce that $e_m=0$ for $m=1,\dots,n-1$ and consequently that 
\[P(z)=z^n+(-1)^ne_n=z^n+\Lambda.\]
Therefore $z_1,\dots,z_n$ are precisely the roots of $z^n+\Lambda$.

Conversely, if $z_1,\dots,z_n$ are the roots of $z^n+\Lambda$, then $e_m=0$ for $1\le m\le n-1$ and
the above coefficient comparison can be reversed to obtain \eqref{equ}, hence equality in \eqref{CS}.
%\end{proof}


\begin{thebibliography}{99}

\bibitem{Schur1918}
I.~Schur,
\"Uber die Verteilung der Wurzeln bei gewissen algebraischen Gleichungen
mit ganzzahligen Koeffizienten,
\emph{Math. Z.} \textbf{1} (1918), 377--402.

\bibitem{Cassels1966}
J.~W.~S.~Cassels,
On a problem of Schinzel and Zassenhaus,
\emph{J. Math. Sci. (Delhi)} \textbf{1} (1966), 1--8.

\bibitem{Alexander1972}
R.~Alexander,
On an inequality of J.~W.~S.~Cassels,
\emph{Amer. Math. Monthly} \textbf{79} (1972), no.~8, 883--884.

\bibitem{Dubickas2020}
A.~Dubickas,
Maximal values of symmetric functions in distances\allowbreak\ between points,
\emph{Math. Inequal. Appl.} \textbf{23} (2020), no.~1, 329--339.

\end{thebibliography}
\end{document}